\title{ A dynamic point of view on universality for random matrices over finite local rings}
\author{Nikita Lvov\footnote{nikita.lvov@mail.mcgill.ca}}
\documentclass{article}
\usepackage{marginnote}
\usepackage{relsize}

\usepackage{ifxetex,ifluatex}
\if\ifxetex T\else\ifluatex T\else F\fi\fi T%
  \usepackage{fontspec}
\else
  \usepackage[T1]{fontenc}

  \usepackage{blkarray, bigstrut}
  \usepackage[utf8]{inputenc}
  
  \usepackage{amsthm}
  \usepackage{thmtools}
  \usepackage{mathtools}

  \usepackage{lmodern}
  \usepackage{amssymb}
  \usepackage{amsfonts}
  \usepackage{tikz-cd}
  \usepackage{amsthm}
  \usepackage{xfrac}
  \usepackage{mathtools}
  \usepackage{multirow}
  \usepackage{mathrsfs}
  \usepackage{comment}

  \newtheorem*{corollary}{Corollary}

  \newcommand{\probP}{\text{I\kern-0.15em P}}
  \newcommand{\probE}{\text{I\kern-0.15em E}}
  \newcommand{\coker}{\text{coker}}
  \newcommand{\defeq}{\overset{\mathrm{def}}{=\joinrel=}}

  \numberwithin{equation}{section}

  \theoremstyle{remark}
  \newtheorem*{remark}{Remark}
  \theoremstyle{remark}

  \newtheorem*{definition}{Definition}

  \newcommand{\rows}{l}
  \newcommand{\cols}{k}

  \newcommand{\Z}{\mathbb{Z}}

  \excludecomment{mysection}
  \excludecomment{mymysection}
  \excludecomment{maybeinclude}

    \newenvironment{mat}
    {
    \left[
\begin{array}}
{ \end{array}
\right]
}

\newcommand{\probM}{\mathcal{M}}
\newcommand{\probU}{\mathcal{U}}

\usepackage{mdframed}

\theoremstyle{theorem}

\newtheorem{prop}{Proposition}

\newtheorem{theorem}{Theorem}

\newtheorem*{theorem*}{Theorem}
\newtheorem{lemma}{Lemma}[theorem]
\newenvironment{ftheo*}
  {\begin{mdframed}\begin{theorem*}}
  {\end{theorem*}\end{mdframed}}

\newtheorem*{goal}{Goal}

  \usepackage{enumitem}

   \DeclareSymbolFont{bbold}{U}{bbold}{m}{n}
   \DeclareSymbolFontAlphabet{\mathbbold}{bbold}
   \newcommand{\one}{\mathbbold{1}}
   
   \theoremstyle{remark}

\fi

 \theoremstyle{definition}

 \newcommand{\comm}[1]{} 

 \newcommand{\lone}{L^1(X_0)}

 \usepackage{cite}
 \newcommand{\commm}[1]{}
 
 \newcommand{\chapter}{section}

       \usepackage{
       hyperref,
       cleveref,
       }
\begin{document}
\maketitle

\abstract{We consider the cokernel corners process for an i.i.d. matrix with entries in a finite local ring. When the distribution of the entries is uniform, this process is a Markov chain, and hence the ergodic theorem for Markov chains can be applied. This implies, in particular, that for uniformly distributed $p$-adic random matrices, the cokernels of the corners are distributed according to the Cohen-Lenstra measure, almost surely. The purpose of this note is to show that the conclusion of the ergodic theorem also holds for i.i.d matrices, provided that the distribution of the entries is not concentrated on the translate of a subring, or the translate of an ideal. This will follow from the bounds proved in a previous paper of the author.}

\renewcommand{\rows}{n}
\renewcommand{\cols}{n+u}
\newcommand{\Mm}{M}
\newcommand{\Mmodule}{\mathbf{M}}
\newcommand{\projj}{\text{proj}}
\newcommand{\One}{\mathlarger{\mathlarger{\one}}}
\newcommand{\probMM}{
\mathcal{M}_{\rows,\cols}
}
\newcommand{\probMMM}{\mathcal{N}}

\newcommand{\linf}[1]{ \Big| \Big| #1 \Big| \Big|_{l^\infty} }

\renewcommand{\lone}[1]{ \Big| \Big| #1 \Big| \Big|_{l^1} }

\newcommand{\ltwo}[1]{
\Big| \Big| #1 \Big| \Big|_{l^2}
}

\newcommand{\largematrix}[1]{
\begin{mat}{ccc|ccc}
&&&*&\hdots&\\
&#1&&\vdots&&\\
&&&*&\hdots&\\ \hline
*&&*&*&\hdots&\\
\vdots&&\vdots&\vdots&\ddots&\\
&&&&&
\end{mat}
}

\renewcommand{\lone}[1]{
\Big| \Big| #1 \Big| \Big|_{l^1}
}

\newcommand{\maxmod}{
\mathfrak{m}
}

\newcommand{\maxbound}
{
\max
\left(
\linf{\xi \mod \maxmod}
\,
,
\,
\frac{1}{ char(R/\maxmod) }
\right)
}

\newcommand{\rev}[1]{\reversemarginpar\marginpar{#1}\reversemarginpar}

\newcommand{\infmatrix}[1]
{
\begin{mat}{ccc|ccc}
&&&*&\hdots&\\
&#1&&\vdots&&\\
&&&*&\hdots&\\ \hline
*&\hdots&*&*&\hdots&\\
\vdots&&\vdots&\vdots&\ddots&\\
&&&&&
\end{mat}
}


\section{Introduction}

First, let $\mathcal{U}_{n,m}$ be an $n \times m$ matrix over
$\Z_p$, whose
entries are sampled uniformly at random. A theorem of Friedman and
Washington describes the asymptotic distribution of
$coker(\mathcal{U}_{n,n})$:

\begin{theorem*} \cite[Proposition 1]{FriedmanWashington1989}
\label{thm:friedmanwashington}
\begin{equation}
\label{eqn:friedmanwashington}
\lim_{n \rightarrow \infty} \probP\Big( coker(\mathcal{U}_{n,n}) \cong A \Big)
=
\frac{c_0}{|Aut(A)|}
\end{equation}
where
\[
c_0=\prod_{i=1}^{\infty} \left( 1 - \frac{1}{p^{i}}  \right)
\]
\end{theorem*}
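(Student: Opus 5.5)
We reduce everything to a count over matrices modulo a fixed power of $p$. Fix a finite abelian $p$-group $A$ and choose $k$ with $p^{k-1}A=0$, i.e. the exponent of $A$ divides $p^{k-1}$. For $M\in M_n(\Z_p)$, the isomorphism class of $coker(M)$ is determined by $coker(M \bmod p^k)$ once we know $coker(M)\otimes \Z/p^k$ has exponent dividing $p^{k-1}$. Indeed, $coker(M)\cong A$ if and only if $coker(M)\otimes \Z/p^k \cong A$. The forward direction is clear. Conversely, if $coker(M)\otimes\Z/p^k\cong A$ has exponent dividing $p^{k-1}$, then $coker(M)$ has no $\Z_p$ summand and no cyclic summand of order $\geq p^k$, so it equals its reduction mod $p^k$.

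Since the reduction of $\mathcal{U}_{n,n}$ mod $p^k$ is uniform on $M_n(\Z/p^k)$, we have
\[
\probP\big(coker(\mathcal{U}_{n,n})\cong A\big)=\frac{\#\{X\in M_n(\Z/p^k): coker(X)\cong A\}}{p^{k n^2}}.
\]

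Next we count the matrices $X$ with $coker(X)\cong A$. Such an $X$ is the same as a surjection $(\Z/p^k)^n\to A$ together with a surjection from the column space $(\Z/p^k)^n$ onto its kernel $K$. The number of surjections $(\Z/p^k)^n\to A$ is $|A|^n\prod_{i}(\ldots)$, more precisely $|A|^n\,(1+o(1))$ as $n\to\infty$ with $A$ fixed. Each of these is counted $|Aut(A)|$ times for a given image submodule. The kernel $K$ satisfies $K\cong(\Z/p^k)^{n-r}\oplus B$, where $r$ is the rank of $A$ and $B$ is a correction with $|K|=p^{kn}/|A|$. The number of surjections $(\Z/p^k)^n\to K$ is $|K|^n\cdot\prod(1-p^{-j})$-type factors. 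Carrying this out, the count is
\[
\frac{1}{|Aut(A)|}\,\#\mathrm{Sur}\big((\Z/p^k)^n,A\big)\cdot\#\mathrm{Sur}\big((\Z/p^k)^n,K\big),
\]
with $\#\mathrm{Sur}((\Z/p^k)^n,A)/|A|^n\to1$. Using $|K|^n=p^{kn^2}/|A|^n$, the normalized ratio becomes
\[
\frac{1}{|Aut(A)|}\cdot\frac{\#\mathrm{Sur}\big((\Z/p^k)^n,K\big)}{|K|^n}.
\]
A surjection onto $K$ is a map that is surjective mod $p$, i.e. onto $K/pK\cong\mathbb{F}_p^{\,n}$. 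Note that $K/pK$ has dimension exactly $n$, because $K$ is a finite-index submodule of the free module, hence generated by $n$ elements, with $n-r$ free of rank, and $K/pK$ has $\mathbb{F}_p$-dimension $n$. So the ratio equals the probability that a uniform $n\times n$ matrix over $\mathbb{F}_p$ is invertible, namely $\prod_{i=1}^n(1-p^{-i})\to c_0$.

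The main obstacle is the bookkeeping in the second step. We must check that the decomposition into a surjection onto $A$ followed by a surjection onto $K$ is a bijection up to exactly the factor $|Aut(A)|$. We must also verify carefully that $K/pK$ has dimension $n$, which follows from $K\subseteq(\Z/p^k)^n$ having cokernel killed by $p^{k-1}$. Alternatively, to avoid this we can use the moment method. Writing $\probE\,\#\mathrm{Sur}(coker\,\mathcal{U}_{n,n},G)=\#\mathrm{Sur}(\Z_p^n,G)/|G|^n\to1$ for every finite abelian $p$-group $G$, the Cohen–Lenstra measure is the unique distribution with all these moments equal to $1$, and it assigns mass $c_0/|Aut(A)|$ to $A$. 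With this route the only delicate point is the uniqueness of the moment problem, which holds because the moments $1$ grow slowly enough (Wood's criterion).
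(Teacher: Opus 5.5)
Your argument is correct. It is a genuinely different route from the paper, which does not prove this statement at all: it cites Friedman--Washington and remarks that the formula can be deduced from the Sawin--Wood limit (\ref{eqn:sawinwood}) for finite local rings. That deduction would pass through your first step anyway: take $R=\Z/p^k$ with $p^{k-1}A=0$, so that $A$ has as many relations as generators and $d(A)=0$. It then rests on an external result, which the paper states only for $u>0$, whereas here $u=0$. Your count, by contrast, is self-contained and gives the exact finite-$n$ value
\[
\probP\big(coker(\mathcal{U}_{n,n})\cong A\big)=\frac{1}{|Aut(A)|}\prod_{j=n-r+1}^{n}\left(1-p^{-j}\right)\prod_{i=1}^{n}\left(1-p^{-i}\right),
\]
hence a rate of convergence. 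With $n+u$ columns the same count gives (\ref{eqn:friedmanwashingtonu}) verbatim. Over an arbitrary finite local ring it reproduces (\ref{eqn:sawinwood}); the only change is that the kernel $K$ of a surjection $R^n\to A$ then has $n-d(A)$ minimal generators instead of $n$.

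Two small points of presentation. First, the sentence in your third paragraph justifying $\dim_{\mathbb{F}_p}K/pK=n$ only yields $n-r\le \dim_{\mathbb{F}_p}K/pK\le n$. The clean argument is the one you hint at in your last paragraph: $p^{k-1}A=0$ forces $p^{k-1}(\Z/p^k)^n\subseteq K$, so $K[p]=p^{k-1}(\Z/p^k)^n$ is $n$-dimensional and $K$ has exactly $n$ nonzero cyclic summands. Second, you only need $|K|$ and $\dim_{\mathbb{F}_p}K/pK$ to be independent of the chosen surjection, not the full isomorphism type of $K$.

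The moment-method alternative is valid but heavier. It needs Wood's theorem in its robust form (convergence of moments implies convergence in distribution, not merely uniqueness). It also needs the separate computation that the Cohen--Lenstra measure has all $\#\mathrm{Sur}$-moments equal to $1$.
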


The distribution (\ref{eqn:friedmanwashington}) on $p$-groups is known
as the Cohen-Lenstra distribution \cite{CohenLenstra}. More generally, \begin{equation}
\label{eqn:friedmanwashingtonu}
\lim_{n \rightarrow \infty} \probP
\Big(
coker(\mathcal{U}_{n,n+u})
\cong
A
\Big)
=
\frac{c_u}{|A|^{u} |Aut(A)|}
\hspace{0.5in} \text{ for $u \geq 0$}
\end{equation}
where
\[
c_u = \prod_{i=u+1}^{\infty} \left( 1 - \frac{1}{p^i} \right)
\]
\newline
\newline
From the recent work of Sawin and Wood
\cite[Lemma 6.7 and Lemma 6.6]{SawinWood}, we can deduce a formula valid for any
\textit{finite} local ring $R$. We consider an $n \times (n+u)$ matrix over $R$,
whose entries are independent and uniformly distributed. We again
denote this matrix as $\probU_{n,n+u}$. Now, \cite[Lemma 6.7]{SawinWood} implies
that for $u>0$, and any finite local ring $R$,
\[
\lim_{n \rightarrow \infty} \probP(coker(\probU_{n,n+u})=A)
=
\]
\begin{equation}
\label{eqn:sawinwood}
=\frac{1}{|A|^u |Aut(A)|}
\prod^{\infty}_{i=d(A)+u+1} \left(1 - \frac{1}{q^i} \right)
\end{equation}
where $q$ is the cardinality the residue field of $R$.
$d(A)$ is defined to be the difference between the number of relations
and the number of elements in the minimal presentation of
$A$, negative if there are more relations than
elements\footnote{For example, $d(R^k)=k$.}. We denote the measure on the right hand side of (\ref{eqn:sawinwood}) as $\mu_u$\rev{$\mu_u$}\footnote{For the reader's convenience, whenever new notation is introduced, this will marked in the left margin.}.
\newline
\newline
The expressions (\ref{eqn:friedmanwashington}) and (\ref{eqn:friedmanwashingtonu}) can both be deduced from (\ref{eqn:sawinwood}). 

\paragraph{Ergodic averages in the uniform case.} 
We slightly refine the above set-up. $\mathcal{U}$ will denote an infinite random matrix over $\Z_p$, whose entries are sampled uniformly and independently at random. Denote by $\probU_{n,m}$ the top left $n \times m$ corner of $\probU$.
\newline
\newline
From \cite{markovchainsoverr}, it follows that the random groups $\coker(\probU_{n,n+u})$ form a recurrent Markov chain. In \cite{markovchainsoverr}, we denote the generator of this Markov chain as $\Delta_u$\rev{$\Delta_u$}; by (\ref{eqn:sawinwood}) , the stationary measure of this Markov chain is $\mu_u$. From the ergodic theorem for Markov chains, it follows that:
\begin{equation}
\label{eqn:ergodicuniform}
\frac{1}{N} \sum_{i=1}^N \One_{(coker(\probU_{i,i+u})=A)}
\xrightarrow{N \rightarrow \infty}
\mu_{u}(A) \text{ a.s.}
\end{equation}

\subsection{Random matrices with i.i.d. entries that are not necessarily uniformly distributed.}

Let $R$ be a finite local ring. Let $\probM$\rev{$\probM$} be an infinite random matrix over $R$ whose entries are i.i.d. random variables, subject to the condition that their distribution is not supported on the translate of a subring of $R$ or the translate of an ideal.
\newline
\newline
Let $\probM_{n,m}$ \rev{$\probM_{n,m}$} denote the top left $n \times m$ corner of $\probM$. The next theorem states that the asymptotic distribution of $coker(\probM_{n,n+u})$ is the same as the asymptotic distribution of $coker(\probU_{n,n+u})$. This is an example of universality.

\begin{theorem*}\cite{arxivtwo}
The limit (\ref{eqn:sawinwood}) continues to hold when $\probU_{n,n+u}$ is replaced by $\probM_{n,n+u}$.
\newline
\end{theorem*}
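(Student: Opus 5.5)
The plan is to compare the cokernel of $\probM_{n,n+u}$ with that of $\probU_{n,n+u}$ through the column-by-column dynamics. Adjoining one row and one column to the corner changes the cokernel by a random transition, and we want to show that the transition kernel for $\probM$ is asymptotically the same as the uniform kernel $\Delta_u$ from \cite{markovchainsoverr}. Write the $(n+1)\times(n+1+u)$ corner as the $n\times(n+u)$ corner bordered by a new row and a new column. The new cokernel is then determined by the old cokernel $A_n=\coker(\probM_{n,n+u})$ together with the images of the new column and new row under the relevant projections. The key observation is that the new column, a vector $v\in R^n$ with i.i.d.\ entries, only enters through its image $\bar v\in A_n$. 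So we must show that $\bar v$ is close to uniform on $A_n$ for most realizations of the corner.

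For this we use Fourier analysis on $A_n$. We have
\[
\bigl|\probP(\bar v=a)-|A_n|^{-1}\bigr|\le \max_{\chi\neq 1}|\probE\,\chi(\bar v)|,
\]
where $\chi$ ranges over characters of $A_n$. Such a character pulls back to a character $\chi(\langle \xi,\cdot\rangle)$ of $R^n$, with $\xi$ lying in the annihilator of the column space of the corner. Its expectation factors as $\prod_i \hat\nu(\xi_i)$, where $\nu$ is the entry distribution. The non-degeneracy hypothesis (not supported on a translate of a subring or an ideal) gives $|\hat\nu(\xi_i)|\le 1-\epsilon$ whenever $\xi_i\neq0$. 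This is essentially the bound involving $\maxbound$ in the author's earlier work. Hence the expectation is exponentially small in the support size of $\xi$.

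The main obstacle is ruling out characters $\xi$ of small support. This is the standard ``structure'' step of Wood-type universality proofs: one must show that, with high probability, no sparse vector $\xi$ annihilates the column space of $\probM_{n,n+u}$. I would handle it by a union bound over sparse $\xi$, using the i.i.d.\ columns: each column is orthogonal to a fixed nonzero $\xi$ with probability at most $1-\epsilon'$, again by non-degeneracy, and there are $n+u$ independent columns. This beats the $\binom{n}{k}|R|^k$ count of $k$-sparse vectors for $k\le \delta n$. The row contributes symmetrically, via the transpose.

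With the transition probabilities of $\coker(\probM_{n,n+u})$ shown to converge to those of $\Delta_u$, uniformly on groups of bounded size, we pass to the limit of the marginals. Tightness follows from the moment method: $\probE\,\#\mathrm{Sur}(\coker,G)$ is bounded, which is again a consequence of the same Fourier bounds. Iterating the nearly uniform kernel from a tight initial law converges to the stationary measure $\mu_u$ of $\Delta_u$, since that chain is recurrent with unique stationary measure. This gives (\ref{eqn:sawinwood}) for $\probM_{n,n+u}$.
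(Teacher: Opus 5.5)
Your architecture matches the paper's dynamic re-derivation (the first corollary in \S2.1): compare one step of the $\probM$-corner process with $\Delta_u$, then use ergodicity of $\Delta_u$. There the one-step comparison is the inequality (\ref{eqn:dtvtinequality}), imported from \cite{arxivtwo} with summable error $O(\theta^n)$. The limit is passed through the interpolating processes $X_n$, so no tightness is needed. Your tightness-based passage to the limit would also work, given a correct one-step estimate.

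The gap is in your proof of that one-step estimate. The claim that the hypothesis gives $|\hat\nu(\xi_i)|\le 1-\epsilon$ for every $\xi_i\neq 0$ is false. In fact $|\hat\nu(\xi_i)|=1$ exactly when the support of $\nu$ lies in a coset of the kernel of the additive character $r\mapsto\psi(\xi_i r)$. That kernel is an arbitrary proper additive subgroup, typically neither an ideal nor a subring. For example, let $R=\mathbb{F}_8=\mathbb{F}_2(\alpha)$, $\psi=(-1)^{\mathrm{Tr}}$, and let $\nu$ be uniform on $H=\{0,1,\alpha,1+\alpha\}$. Its support is not contained in a translate of a proper subring (these have two elements) or of a proper ideal (these are points). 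Yet there is $a\neq 0$ with $\mathrm{Tr}(aH)=0$, so every $\xi\in\{0,a\}^n$ has $\prod_i|\hat\nu(\xi_i)|=1$, whatever its support size. Dense characters therefore need not have small Fourier coefficients, and your sparse/dense dichotomy does not close.

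What is missing is a second structure step. You must show that, with high probability, the annihilator of the column space contains no nonzero $\xi$ all but a few of whose coordinates lie where $|\hat\nu|$ equals or is close to $1$. The same is needed, for the row, for the right kernel of the matrix bordered by the new column. This step, not Fourier decay, is where the subring condition has to be used.

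To see that it is genuinely needed there, take $\nu$ uniform on $\{0,1\}=\mathbb{F}_2\subset\mathbb{F}_4$. Your sparse union bound still goes through, since it only needs $\nu$ not to be supported on a coset of a proper ideal. But $\coker(\probM_{n,n+u})=\coker_{\mathbb{F}_2}(\probM_{n,n+u})\otimes\mathbb{F}_4$ follows the $q=2$ law rather than $\mu_u$. The obstruction is precisely the dense characters $\xi\in\mathbb{F}_2^n$: they have Fourier coefficient $1$ and annihilate the column space with probability bounded away from $0$.

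The same gap affects your tightness claim, which you derive ``from the same Fourier bounds.'' This structural input is exactly what the paper takes from \cite{arxivtwo} via (\ref{eqn:dtvtinequality}) rather than reproving.
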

\begin{remark} For $R \cong \Z/p^N \Z$, this theorem appeared in the work of Maples and Wood,\cite{Maples1}\cite{WoodIntegral}, with stronger results proven by Nguyen and Wood\cite{WoodNguyen}. When $R$ is the quotient of a DVR, this is proven by Yan, under a slightly different assumption on the distribution of the entries \cite{Yan}. 
\end{remark}

\paragraph{The main result: universality for ergodic averages.} In this note, we prove another manifestation of universality; we show that (\ref{eqn:ergodicuniform}) also holds when $\probU_{n,n+u}$ is replaced by $\probM_{n,n+u}$.
\begin{theorem}
\label{thm:ergodiciid}
Let $\probM_{i,i+u}$ be the top left $i \times i+u$ corner of $\probM$, where $\probM$ is the infinite random matrix defined above. Then,
\begin{equation}
\label{eqn:ergodiciid}
\frac{1}{N} \sum_{i=1}^N \One_{(coker(\probM_{i,i+u})=A)}
\xrightarrow{N \rightarrow \infty}
\mu_{u}(A) \text{ a.s.} \end{equation}
\end{theorem}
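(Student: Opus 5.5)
The plan is to reduce the almost sure statement to a variance (second moment) estimate, combined with the universality theorem of \cite{arxivtwo} for the means. Write $X_i = \One_{(coker(\probM_{i,i+u})=A)}$ and $S_N=\sum_{i=1}^N X_i$. By the theorem of \cite{arxivtwo}, $\probE X_i \to \mu_u(A)$, so the Ces\`aro averages $\frac1N\sum_{i\le N}\probE X_i\to\mu_u(A)$. It therefore suffices to show that $\frac1N(S_N-\probE S_N)\to 0$ almost surely. Since $0\le X_i\le 1$, the standard approach is to prove a variance bound of the form $\mathrm{Var}(S_N)\le C N^{2-\delta}$ for some $\delta>0$. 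This gives almost sure convergence along a sparse subsequence $N_k=\lfloor k^{\gamma}\rfloor$ with $\gamma\delta>1$, by Chebyshev and Borel--Cantelli. Because the summands are bounded, one then interpolates between $N_k$ and $N_{k+1}$, using $N_{k+1}/N_k\to1$.

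The variance bound comes from controlling the covariances $\mathrm{Cov}(X_i,X_j)$ for $i<j$ with $j-i$ large. The key structural fact is that $\probM_{j,j+u}$ is obtained from $\probM_{i,i+u}$ by appending $j-i$ fresh rows and columns of i.i.d.\ entries. The bounds of the author's previous paper \cite{arxivtwo} (and the Markov-chain description in \cite{markovchainsoverr}) should show the following. Conditionally on the corner $\probM_{i,i+u}$, hence on $coker(\probM_{i,i+u})=B$, the law of $coker(\probM_{j,j+u})$ is close to $\mu_u$, with an error $\epsilon(j-i)$ that tends to $0$ as $j-i\to\infty$, uniformly in the conditioning. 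Concretely, I would take the argument of \cite{arxivtwo}, which runs the moment method (or its $l^\infty/l^1$ Fourier bounds, with the constant $\max(\|\xi \bmod \maxmod\|_{l^\infty}, 1/char(R/\maxmod))$) for a matrix whose top-left block is fixed and whose remaining entries are i.i.d. The aim is a bound
\[
\bigl|\probP(coker(\probM_{j,j+u})=A \mid \probM_{i,i+u}) - \mu_u(A)\bigr| \le \epsilon(j-i), \qquad \epsilon(m)\le C e^{-cm} \text{ (or } Cm^{-c}).
\]
Given this, $|\mathrm{Cov}(X_i,X_j)|\le 2\epsilon(j-i)+|\probE X_j-\mu_u(A)|$. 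Splitting the double sum into pairs with $|i-j|\le N^{1-\delta}$ and the rest, and using the rate in \cite{arxivtwo} for $|\probE X_j-\mu_u(A)|$, gives $\mathrm{Var}(S_N)=o(N^{2-\delta})$ as needed.

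The main obstacle is this uniform conditional estimate: \cite{arxivtwo} treats a fully i.i.d.\ matrix, while here the first $i$ rows and columns are an arbitrary fixed block. I would handle it by reducing the fixed block via row and column operations over $R$. Since $R$ is local, the block $\probM_{i,i+u}$ can be brought to the form $\mathrm{diag}$ plus a bounded-size part, with the fresh rows and columns transformed by invertible matrices independent of them. This preserves the i.i.d.\ structure only up to mixing with the fixed block. To control that mixing, I would instead condition on the cokernel $B$ and run the moment computation $\probE\,\#\mathrm{Sur}(coker,G)$ for the bordered matrix. The fresh columns must be generic enough to kill the contribution of $B$, and the non-degeneracy assumption (not supported on a translate of a subring or ideal) is exactly what gives the exponential decay of the relevant Fourier coefficients in \cite{arxivtwo}. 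If a fully uniform bound proves elusive, a fallback is to require only that $\epsilon$ be uniform over conditioning events of probability bounded below, and to absorb the rest into the covariance sum.
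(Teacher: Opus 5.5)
\textbf{There is a genuine gap, at the step you yourself flag as the main obstacle, and as formulated it cannot be filled.} The estimate
\[
\bigl|\probP(\coker(\probM_{j,j+u})=A \mid \probM_{i,i+u})-\mu_u(A)\bigr|\le\epsilon(j-i), \qquad \text{uniformly in the conditioning,}
\]
is false. Bordering by one row and one column raises the rank mod $\mathfrak{m}$ by at most $2$. So the minimal number of generators of the cokernel drops by at most one per step.

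Take any $a$ in the support of the entries. The block with all entries equal to $a$ has positive probability, and its cokernel needs at least $i-1$ generators. Given this block, $\probP(\coker(\probM_{j,j+u})=A\mid \probM_{i,i+u})=0$ whenever $j-i<i-1-d$, where $d$ is the number of generators of $A$. So no $\epsilon(j-i)\to 0$ exists: $\Delta_u$ is not uniformly ergodic. Conditioning instead on $\{\coker=B\}$ fails for large $B$ for the same reason.

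Your fallback is to condition only on $\{\coker(\probM_{i,i+u})=A\}$, which is all the covariance of two indicators needs. It still lacks two ingredients.
\begin{itemize}
\item For non-uniform entries the corner process is not Markov. The conditional law of the next cokernel depends on the corner matrix, not just on its cokernel. For instance, if $0,1$ lie in the support, then given the block $[I_i\,|\,0]$ the next cokernel is $R/(e',r_{i+1},\dots,r_{i+u})$, with the $r_{i+k}$ distributed as the entries themselves. For $u\ge 1$ this is in general not close to $\Delta_u(0,\cdot)$, however large $i$ is. The bounds of \cite{arxivtwo} are for fully i.i.d.\ matrices, and your bordered moment-method sketch does not address this.
\item Even after passing to $\Delta_u$, you need a polynomial rate for $\Delta_u^{m}(A,A)\to\mu_u(A)$ to get $\mathrm{Var}(S_N)\le CN^{2-\delta}$. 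This in particular requires aperiodicity, and nothing in your plan provides it.
\end{itemize}

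\textbf{The paper avoids both issues by comparing whole paths in total variation rather than conditioning.} The inequality from \cite{arxivtwo} bounds by $O(\theta^n)$ the total variation distance between the $T\times T$-orbits of two matrices: $\probM_{n+1,n+u+1}$, and $\probM_{n,n+u}$ bordered by a uniform row and column. This orbit determines the whole history of corner cokernels. Hence the law of (history up to $n$, true next cokernel) is $O(\theta^n)$-close to the law of (history up to $n$, one $\Delta_u$-step).

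Telescoping, the true process is within $O(\theta^n)$ in total variation of the hybrid process that follows $\probM$ up to time $n$ and then runs the $\Delta_u$-chain. The qualitative ergodic theorem gives the convergence event probability one under each hybrid process. It only needs Ces\`aro averages, works for any initial law, and requires no rate and no aperiodicity. So the event has probability at least $1-O(\theta^n)$ under the true process, and one lets $n\to\infty$.

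If you want to keep your second-moment scheme, this same comparison at time $n=i$ is the correct replacement for your conditional estimate. In your notation it gives $\probP(X_i=X_j=1)=\probP(X_i=1)\,\Delta_u^{j-i}(A,A)+O(\theta^i)$. You would then still owe a quantitative mixing rate for $\Delta_u$ started at $A$, which the paper's argument never needs.
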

We deduce this from \cite{markovchainsoverr} and \cite{arxivtwo}. Indeed, as mentioned previously, \cite{markovchainsoverr} implies that 
\[
\hdots\hspace{0.1in} ,\hspace{0.1in} coker(\probU_{i,i+u}) \hspace{0.1in} ,\hspace{0.1in} coker(\probU_{i+1,i+u+1}) \hspace{0.1in} , \hspace{0.1in}  \hdots
\]
is a Markov chain generated by a certain operator, $\Delta_u$, while \cite{arxivtwo} implies that the process
\begin{equation}
\label{eqn:process}
\hdots\hspace{0.1in} ,\hspace{0.1in} coker(\probM_{i,i+u}) \hspace{0.1in} ,\hspace{0.1in}  coker(\probM_{i+1,i+u+1}) \hspace{0.1in} , \hspace{0.1in}  \hdots
\end{equation}
is "approximately" a Markov chain generated by $\Delta_u$, in a certain quantitative sense. This allows us to deduce (\ref{eqn:ergodiciid}) from the ergodic theorem for Markov chains.

\subsection{Acknowledgements}

The author would like to thank Hoi Nguyen, Alexander Van Werde and Melanie Wood for helpful discussions related to the subject of this paper. The author would also like to thank Alexander Yu for his encouragement.
\newline
\newline
\noindent
AI was not used in the course of this project.

\section{The corner process is approximately a Markov chain}

We use the symbol "$*$" to denote independent uniformly random variables. Let $T$ denote the group of upper triangular matrices with $1$'s on the diagonal, and denote by $t$ the map
\begin{equation}
\label{eqn:definitiont}
t: \, Mat \, \rightarrow \, T \, \backslash Mat \, /T
\end{equation}
that takes a matrix to its orbit under the action of $T \times T$. Finally, denote by $d_{TV}$ the total variation distance.
The inequality of \cite[Corollary immediately preceding \S 2.3]{arxivtwo} implies that

\begin{equation}
\label{eqn:dtvtinequality}
d_{TV}
\left(
t
\begin{mat}{ccc|c}
&&& * \\
&\probM_{n,n+u}&& \vdots \\
&&& * \\ \hline
*& \hdots & * & *
\end{mat}
,
t
\begin{mat}{ccc} 
&& \\
&\probM_{n+1,n+u+1} & \\
&&
\end{mat}
\right) < O(\theta^n)
\end{equation}
where $\theta<1$\rev{$\theta$} is an explicit constant (defined in \cite[Theorem 2.2]{arxivtwo}), that depends only on the distribution of the entries of $\probM$ and on the cardinality of the residue field of $R$.

\begin{lemma}
\label{lem:totvar}
The total variation distance between 
\[
\Big(
\begin{array}{ccccc}
\hdots\,,&
coker(\probM_{i,i+u})\,,&
\hdots\,,&
coker(\probM_{n,n+u})\,,&
coker(\probM_{n+1,n+u+1})
\end{array}
\Big)
\]
and
\[
\Big(
\begin{array}{ccccc}
\hdots\,,&
coker(\probM_{i,i+u})\,,&
\hdots\,,&
coker(\probM_{n,n+u})\,,&
\Delta_u coker(\probM_{n,n+u})
\end{array}
\Big)
\]
is bounded above by $O(\theta^n)$.
\end{lemma}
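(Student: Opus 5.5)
The idea is to condition on the whole $n\times(n+u)$ corner $\probM_{n,n+u}$ and to use that the new row and column are independent of it. Write $X=\probM_{n,n+u}$. Let $Y$ be the matrix obtained by bordering $X$ with an independent uniform row and column, as in the first matrix of (\ref{eqn:dtvtinequality}). Let $Z=\probM_{n+1,n+u+1}$. The earlier cokernels $coker(\probM_{i,i+u})$ for $i\le n$ are functions of $X$, so both tuples in the statement are images of a pair (function of $X$, last cokernel). Total variation does not increase under measurable maps. It therefore suffices to bound
\[
d_{TV}\bigl( (X, coker(Z)),\ (X, coker(Y)) \bigr),
\]
and to identify the law of $coker(Y)$ given $X$ with $\Delta_u\, coker(X)$.

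The identification is the content of \cite{markovchainsoverr}. Bordering by a uniform row and column produces a new cokernel whose conditional law depends only on $coker(X)$, and this law is the transition kernel $\Delta_u$. The same computation that shows the uniform corner process is a Markov chain applies here, because it only uses that the added row and column are uniform and independent of the corner. So the second tuple in the statement has the same law as $(\ldots, coker(X), coker(Y))$. Also, the cokernel of a matrix depends only on its class in $T\backslash Mat/T$, since multiplying by invertible matrices on either side does not change the cokernel up to isomorphism. Hence $coker(Y)$ and $coker(Z)$ are functions of $t(Y)$ and $t(Z)$.

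The main obstacle is upgrading the marginal bound (\ref{eqn:dtvtinequality}) to a bound for the joint law with $X$ (or with $coker(X)$). I would first check whether the corollary in \cite{arxivtwo} is proved conditionally on the corner, i.e. whether it gives
\[
d_{TV}\bigl(t(Y)\mid X,\ t(Z)\mid X\bigr)\le O(\theta^n)
\]
uniformly, or on average over $X$. Its proof exposes the new row and column while keeping the corner fixed, which suggests this is the case. Given $X$, the new row and column of $Z$ are i.i.d. with the entry distribution, and those of $Y$ are uniform, in both cases independent of $X$. So the joint total variation equals $\probE_X\, d_{TV}(\cdot\mid X)$, which is then $O(\theta^n)$.

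If that bound is only available for typical $X$, I would split on an event $G$ such as "$coker(X)$ has bounded size/rank". On $G$ I would use the conditional bound. For $G^c$ I would use the tail estimate from \cite{arxivtwo}, controlling it by moments, and absorb it into $O(\theta^n)$, possibly after shrinking $\theta$. Combining this with the contraction of total variation under the maps to cokernels gives the lemma.
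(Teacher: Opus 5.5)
\textbf{There is a genuine gap.} The step you call the main obstacle is never resolved. You only propose to check whether \cite{arxivtwo} contains a version of (\ref{eqn:dtvtinequality}) conditional on the corner, or to split on an unspecified good event and absorb the tail. As written, the proposal therefore does not prove the lemma.

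The missing idea is that (\ref{eqn:dtvtinequality}) is not a marginal statement about the last cokernel. The double coset $t(Z)$ of an $(n+1)\times(n+u+1)$ matrix already determines the cokernels of all of its nested top-left corners. Take the convention for the $T\times T$-action that respects top-left corners, $Z\mapsto A^{\top}ZB$ with $A,B\in T$. Then the top-left $i\times j$ block of $A^{\top}ZB$ is $A_i^{\top}Z_{i,j}B_j$. Here $A_i,B_j,Z_{i,j}$ denote top-left blocks, and $A_i,B_j$ are again unipotent, hence invertible. So every $\coker(Z_{i,i+u})$ with $i\le n+1$ is a function of $t(Z)$; you noted this only for $i=n+1$.

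Hence there is a single deterministic map $\Phi$ such that $\Phi(t(\probM_{n+1,n+u+1}))$ is the first tuple of the lemma. The same map gives $\Phi(t(Y))=(\ldots,\coker(\probM_{i,i+u}),\ldots,\coker(\probM_{n,n+u}),\coker(Y))$, because the top-left $n\times(n+u)$ corner of $Y$ is $\probM_{n,n+u}$. Total variation contracts under $\Phi$. Combine this with your correct identification of the law of $(\ldots,\coker(\probM_{n,n+u}),\coker(Y))$ with the second tuple, via the Markov property of $\Delta_u$ from \cite{markovchainsoverr}. This gives the lemma immediately, with the same $\theta$, and it is exactly the paper's one-line proof.

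Your target $d_{TV}\bigl((X,\coker(Z)),(X,\coker(Y))\bigr)$ is joint with the entire corner $X$. It is stronger than what the lemma needs, and it does not follow from (\ref{eqn:dtvtinequality}), since $t(Z)$ does not determine $X$. Obtaining it would mean redoing the analysis of \cite{arxivtwo} conditionally on $X$ and handling atypical $X$ separately. Also, absorbing a weaker error term would require enlarging $\theta$, not shrinking it. That would no longer give the lemma with the $\theta$ fixed in (\ref{eqn:dtvtinequality}).
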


\begin{proof}
This is an immediate consequence of the inequality (\ref{eqn:dtvtinequality}).
\end{proof}

\begin{definition}
Denote by $X_n$ the following process, indexed by $i \in \mathbb{N}$:
\[
\Big( \hspace{0.1in} \hdots \hspace{0.1in}, coker(\probM_{i,i+u}) \,, \hspace{0.1in} \hdots \hspace{0.1in} ,
coker(\probM_{n,n+u})\,,  
\]
\[
\Delta_u coker(\probM_{n,n+u})\,,
 \hspace{0.1in} \hdots \hspace{0.1in} ,
\Delta_u^{i-n} coker(\probM_{n,n+u}) \,, \hspace{0.1in}
\hdots \hspace{0.1in} \Big)
\]
$X_\infty$ denotes the process:
\[
\Big( \hspace{0.1in} \hdots \hspace{0.1in}, coker(\probM_{i,i+u}) \,, \hspace{0.1in} \hdots \hspace{0.1in}  \Big)
\]
\end{definition}
\begin{corollary} (of \autoref{lem:totvar})
\begin{equation}
\label{eqn:xn}
d_{TV}(X_n,X_{n+1}) < O(\theta^n)
\end{equation}
\end{corollary}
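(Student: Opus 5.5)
The plan is to view $X_n$ and $X_{n+1}$ as the images of two simpler random objects under one common measurable map, and then use that total variation distance cannot increase under pushforward. Set
\[
Y_n=\big(coker(\probM_{1,1+u}),\dots,coker(\probM_{n,n+u}),\Delta_u coker(\probM_{n,n+u})\big),
\]
\[
Y_{n+1}'=\big(coker(\probM_{1,1+u}),\dots,coker(\probM_{n,n+u}),coker(\probM_{n+1,n+u+1})\big).
\]
These are exactly the two length-$(n+1)$ vectors compared in \autoref{lem:totvar}. That lemma therefore gives $d_{TV}(Y_n,Y'_{n+1})<O(\theta^n)$.

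The key construction is a Markov kernel $K$ that appends the tail. Given a finite vector $(a_1,\dots,a_{n+1})$, $K$ outputs the infinite sequence
\[
(a_1,\dots,a_{n+1},b_{n+2},b_{n+3},\dots),
\]
where $b_{n+2},b_{n+3},\dots$ are the successive steps of the Markov chain with generator $\Delta_u$ started at $a_{n+1}$. These steps are run with fresh randomness, independent of everything else.

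I then check the two identifications.
\begin{itemize}
\item $K$ applied to $Y_n$ has the law of $X_n$. The coordinates of $X_n$ beyond index $n$ are $\Delta_u^{i-n}coker(\probM_{n,n+u})$. This is precisely the chain run from $coker(\probM_{n,n+u})$, where by convention $\Delta_u$ denotes one step of the chain using independent randomness.
\item $K$ applied to $Y'_{n+1}$ has the law of $X_{n+1}$. The tail of $X_{n+1}$ is the chain run from $coker(\probM_{n+1,n+u+1})$, which is again independent of the past.
\end{itemize}
The only point needing care is to fix this interpretation of $\Delta_u^{k}$: iterated steps of the chain with fresh randomness, so that the process is determined in law by its initial segment together with a Markov kernel. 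Once that convention is made explicit, both identifications are immediate.

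Finally, for any Markov kernel $K$ and laws $\mu,\nu$ one has $d_{TV}(\mu K,\nu K)\le d_{TV}(\mu,\nu)$. This can be seen by coupling $Y_n$ and $Y'_{n+1}$ optimally and then using the same tail randomness for both. It follows that
\[
d_{TV}(X_n,X_{n+1})\le d_{TV}(Y_n,Y'_{n+1})<O(\theta^n),
\]
which is (\ref{eqn:xn}).
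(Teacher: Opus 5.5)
Your proof is correct and is essentially the argument the paper leaves implicit when it states this as an immediate corollary of \autoref{lem:totvar}. The lemma compares the two length-$(n+1)$ vectors, and $X_n$, $X_{n+1}$ are obtained from them by appending the same $\Delta_u$-chain started at the last coordinate, which cannot increase total variation distance. You also make explicit that $\Delta_u^{k}$ must mean iterated chain steps with fresh randomness; this is the convention the paper relies on later when it applies the ergodic theorem to $X_n$.
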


\begin{theorem}
\label{thm:xinf}
$X_n$ converges to $X_\infty$ in total variation. More precisely:
\begin{equation}
\label{eqn:xinf}
d_{TV}(X_n, X_\infty) < O(\theta^n)
\end{equation}
\end{theorem}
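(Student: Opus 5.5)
The plan is to sum the bounds of the Corollary (\ref{eqn:xn}) along a telescoping chain and then pass to the limit. All processes $X_n$ and $X_\infty$ take values in the same countable product space $\mathcal{G}^{\mathbb{N}}$, where $\mathcal{G}$ is the countable set of isomorphism classes of finite $R$-modules. We equip this space with the product $\sigma$-algebra, generated by cylinder sets depending on finitely many coordinates. Total variation distance on this space is the supremum over measurable sets. It is also the supremum over cylinder events of $|\probP(X\in C)-\probP(Y\in C)|$, since cylinder sets form an algebra generating the $\sigma$-algebra, and a monotone class argument lets us approximate any measurable set by cylinders in both laws simultaneously.

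\textbf{Step 1 (telescoping).} For $m>n$, the triangle inequality for $d_{TV}$ and (\ref{eqn:xn}) give
\[
d_{TV}(X_n,X_m)\le \sum_{k=n}^{m-1} d_{TV}(X_k,X_{k+1}) \le C\sum_{k=n}^{\infty}\theta^k = \frac{C}{1-\theta}\,\theta^n .
\]
Here $C$ is the implicit constant in (\ref{eqn:xn}). We have to check that this constant is uniform in $k$. This is the point of the $O(\theta^k)$ statement inherited from (\ref{eqn:dtvtinequality}), whose constant depends only on the entry distribution and on $q$. The bound is also uniform in $m$.

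\textbf{Step 2 (agreement on cylinders).} Fix a cylinder event $C$ depending only on the coordinates $i\le K$. For every $m\ge K$, the first $K$ coordinates of $X_m$ are exactly $coker(\probM_{i,i+u})$ for $i\le K$, which coincide with those of $X_\infty$. Hence $\probP(X_m\in C)=\probP(X_\infty\in C)$ for all $m\ge K$. Combining with Step 1, for $m\ge\max(K,n)$ we get
\[
|\probP(X_n\in C)-\probP(X_\infty\in C)| = |\probP(X_n\in C)-\probP(X_m\in C)| \le \frac{C}{1-\theta}\theta^n .
\]
The right-hand side does not depend on $K$. Taking the supremum over all cylinder events yields the same bound for $d_{TV}(X_n,X_\infty)$, which is (\ref{eqn:xinf}).

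\textbf{Main obstacle.} The only delicate point is passing from cylinder events to arbitrary measurable events in the product space, that is, justifying that total variation is determined by cylinders. I would handle it as follows. Given a measurable $B$ and $\epsilon>0$, choose a cylinder $C$ with $(\mathcal{L}(X_n)+\mathcal{L}(X_\infty))(B\triangle C)<\epsilon$. This is possible by the standard approximation lemma applied to the finite measure $\mathcal{L}(X_n)+\mathcal{L}(X_\infty)$. Then let $\epsilon\to0$.

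An alternative route is to use the completeness of the space of probability measures under total variation: $(\mathcal{L}(X_n))$ is Cauchy, so it has a limit. Its finite-dimensional marginals agree with those of $X_\infty$, so the limit is $\mathcal{L}(X_\infty)$, and the rate follows from Step 1.

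If Lemma~\ref{lem:totvar} is read as a coupling statement, one can instead couple the processes directly. Couple $X_k$ and $X_{k+1}$ so that they disagree with probability $O(\theta^k)$, and chain these couplings via the gluing lemma. The union bound then gives $\probP(X_n\ne X_\infty)=O(\theta^n)$, which bounds $d_{TV}$ directly.
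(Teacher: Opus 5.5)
Your proposal is correct. Step~1 is the paper's telescoping step, and your ``alternative route'' is exactly the paper's proof: the laws of the $X_n$ are Cauchy in total variation, completeness gives a limit $Y$ with $d_{TV}(X_n,Y)\le O(\theta^n)$, and $Y$ is identified with the law of $X_\infty$. That identification works because, for $n\ge N$, the first $N$ coordinates of $X_n$ are literally those of $X_\infty$.

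Your main route differs in that it never constructs the limit. You compare $X_n$ with $X_\infty$ on each cylinder event by passing through $X_m$ with $m\ge\max(K,n)$, which gives a bound independent of the cylinder. You then transfer that bound to arbitrary measurable sets by approximating in $\mathcal{L}(X_n)+\mathcal{L}(X_\infty)$-measure.

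The two routes share the same key observation, namely eventual agreement of finite-dimensional marginals. They also use equivalent measure-theoretic input: a law on the product $\sigma$-algebra is controlled by cylinder sets. The paper gets this through uniqueness of extension; you get it through the approximation lemma. Your version avoids completeness of the space of probability measures and yields the explicit constant $C/(1-\theta)$. The paper's version is shorter to state.

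You are right to flag that the implicit constant in $d_{TV}(X_k,X_{k+1})<O(\theta^k)$ must be uniform in $k$; the paper uses this tacitly. Two minor points. You use the letter $C$ both for that constant and for the cylinder set in Step~2. In the coupling variant, you would still need to realize $X_\infty$ inside the glued space, e.g.\ as the almost surely eventually constant limit of the coupled $X_k$, and identify its law by the same marginal argument.
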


\begin{proof}
(\ref{eqn:xn}) implies that for any $k$,
\[
d_{TV}(X_n,X_{n+k}) < O(\theta^n)
\]
where the implicit constant is different from the one in (\ref{eqn:xn}). Hence, $X_n$ is a Cauchy sequence in the total variation topology.
\newline
\newline
Now, the space of probability measures, endowed with the total variation topology, is complete. Therefore, there exists $Y$ such that 
\begin{equation}
\label{eqn: cauchy}
d_{TV}(X_n,Y) \leq O(\theta^n)
\end{equation}
For any $N$, and for large enough $n$, the pushforward of the distribution of $X_n$ to the first $N$ coordinates must coincide with the pushforward of the distribution of $X_\infty$  to the first $N$ coordinates. The inequality (\ref{eqn: cauchy}) implies that, for any $N$, the pushforward of the distribution of $Y$ to the first $N$ coordinates must coincide with the pushforward of the distribution of $X_\infty$  to the first $N$ coordinates. As this is true for any $N$, $X_{\infty}$ must have the same distribution as $Y$.
\end{proof}

\subsection{Corollaries}

\begin{corollary} 
The asymptotic distribution of $coker(\probM_{i,i+u})$ is $\mu_{u}$.
\end{corollary}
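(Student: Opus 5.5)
The plan is to read off the one-dimensional marginals of $X_n$ and $X_\infty$ from \autoref{thm:xinf}, and then use the convergence of the Markov chain generated by $\Delta_u$ to its stationary measure $\mu_u$.

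Fix $n$ and let $i > n$. By the definition of $X_n$, its $i$-th coordinate is $\Delta_u^{i-n}\, coker(\probM_{n,n+u})$. This is the state at time $i-n$ of the Markov chain generated by $\Delta_u$, started from the (random) initial condition $coker(\probM_{n,n+u})$. The $i$-th coordinate of $X_\infty$ is $coker(\probM_{i,i+u})$. Total variation distance cannot increase under pushforward to a single coordinate, so \autoref{thm:xinf} gives
\[
\left| \probP\big(coker(\probM_{i,i+u}) = A\big) - \probP\big(\Delta_u^{i-n} coker(\probM_{n,n+u}) = A\big) \right| \leq d_{TV}(X_n, X_\infty) < O(\theta^n),
\]
and this holds uniformly in $i > n$.

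Next I will let $i \to \infty$ with $n$ fixed. By \cite{markovchainsoverr}, the chain generated by $\Delta_u$ is recurrent with stationary measure $\mu_u$; by (\ref{eqn:sawinwood}) and the ergodic/convergence theorem for Markov chains, its time-$k$ distribution converges to $\mu_u$ from any initial state. The initial distribution, the law of $coker(\probM_{n,n+u})$, lives on finitely many modules, since an $n\times(n+u)$ matrix over a finite ring has only finitely many possible cokernels. The convergence from each starting state therefore gives convergence from this mixture, and
\[
\probP\big(\Delta_u^{k} coker(\probM_{n,n+u}) = A\big) \to \mu_u(A) \quad \text{as } k \to \infty.
\]
Combining this with the displayed bound,
\[
\limsup_{i\to\infty} \left|\probP\big(coker(\probM_{i,i+u})=A\big) - \mu_u(A)\right| \leq O(\theta^n),
\]
and letting $n \to \infty$ proves the corollary.

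The main obstacle is justifying convergence of the time-$k$ law to $\mu_u$, rather than only Cesàro convergence. This requires the chain to be irreducible on the relevant state space, positive recurrent (which follows from the existence of the stationary probability $\mu_u$), and aperiodic. Aperiodicity should follow because the chain has positive probability of staying put: adding a uniformly random row and column can leave the cokernel unchanged, for instance when the new corner entry is a unit. If aperiodicity turned out to be delicate, I would fall back on Cesàro averages, which suffice for the ergodic statement \autoref{thm:ergodiciid}. Alternatively, the marginal statement follows directly from the universality theorem of \cite{arxivtwo} quoted above, and that serves as a consistency check.
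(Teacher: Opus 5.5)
Your proposal is correct and follows the paper's proof: you project the bound $d_{TV}(X_n,X_\infty)<O(\theta^n)$ of \autoref{thm:xinf} onto the $i$-th coordinate, let $i\to\infty$ using convergence of the $\Delta_u$-chain to $\mu_u$, and then let $n\to\infty$. You are more careful than the paper, which only asserts that $(X_n)_i$ converges to $\mu_u$ ``because $\Delta_u$ generates a recurrent Markov chain'' and does not mention aperiodicity or irreducibility. One small correction to your aperiodicity remark: write $c$, $r$, $x$ for the new column, new row and corner entry; a unit $x$ alone only gives cokernel $\coker(M - c x^{-1} r)$, so leaving the cokernel unchanged also needs, e.g., $r=0$, which still has positive probability.
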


\begin{proof}
In the limit $i \rightarrow \infty$, the distribution of $(X_n)_i$ converges to $\mu_u$ in total variation, because $\Delta_u$ generates a recurrent Markov chain. By \autoref{thm:xinf},
\[
d_{TV}\Big(
(X_n)_i,
(X_{\infty})_{i}
\Big) < O(\theta^n)
\]
We take the $limsup$ as $i \rightarrow \infty$ to get:
\[
\limsup_{i \rightarrow \infty} \, d_{TV}\Big(\, \mu_0 \,, \,
(X_{\infty})_{i} \,
\Big) < O(\theta^n)
\]
and then take the limit as $n \rightarrow  \infty$. This shows that the distribution of $(X_\infty)_i$ must also converge to $\mu_u$. This proves the corollary.
\end{proof}

\begin{remark}
The statement of the corollary was previously demonstrated in \cite{arxivtwo}, with an explicit convergence rate. The result is proven again here, in order to show that it can be deduced from a dynamic perspective.
\end{remark}
The next corollary is the main result of this note.

\begin{corollary}[\autoref{thm:ergodiciid}]
\begin{equation}
\label{eqn:ergodicaverages}
\frac{1}{N} \sum_{i=1}^N \One_{(coker(\probM_{i,i+u})=A)}
\xrightarrow{N \rightarrow \infty}
\mu_{u}(A) \text{ a.s.}
\end{equation}
\end{corollary}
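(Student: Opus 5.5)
The plan is to transfer the almost-sure ergodic statement from $X_n$ to $X_\infty$ using \autoref{thm:xinf}; only a single fixed $n$ is needed. Fix $n$. The process $X_n$ consists of finitely many coordinates $coker(\probM_{i,i+u})$ for $i\le n$, followed by a Markov chain generated by $\Delta_u$ and started at the random state $coker(\probM_{n,n+u})$. The chain generated by $\Delta_u$ is recurrent with stationary measure $\mu_u$, so the ergodic theorem for Markov chains applies to the tail of $X_n$. It applies from every initial state, hence also from a random initial state, by conditioning on $coker(\probM_{n,n+u})$. Changing finitely many terms does not affect the Cesàro limit. Consequently the event
\[
E=\Big\{ (x_i)_{i\ge1} : \frac{1}{N}\sum_{i=1}^N \One_{(x_i=A)} \xrightarrow{N\rightarrow\infty} \mu_u(A)\Big\}
\]
has $\probP(X_n\in E)=1$. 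Here $E$ is a measurable subset of the product space of sequences of finite $R$-modules (up to isomorphism, a countable set), so it is a legitimate event for total-variation comparison.

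Next, \autoref{thm:xinf} gives $d_{TV}(X_n,X_\infty)<C\theta^n$. Total variation distance controls the probabilities of all measurable sets, so
\[
\probP(X_\infty\in E)\ \ge\ \probP(X_n\in E)-C\theta^n = 1-C\theta^n .
\]
The left side does not depend on $n$, so letting $n\to\infty$ gives $\probP(X_\infty\in E)=1$. This is exactly (\ref{eqn:ergodicaverages}).

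The main point to check is the applicability of the ergodic theorem. We need the $\Delta_u$-chain to be positive recurrent on its (countable) state space, with $\mu_u$ its unique stationary distribution, and we need convergence of Cesàro averages almost surely from any starting state, including those outside the support of $\mu_u$. I would cite \cite{markovchainsoverr} for recurrence and use (\ref{eqn:sawinwood}) to identify $\mu_u$ as stationary, exactly as in the uniform case (\ref{eqn:ergodicuniform}). If irreducibility is not automatic, I would observe that every state $coker(\probM_{n,n+u})$ reaches the recurrent class supporting $\mu_u$ almost surely. The shift to that class happens at a finite random time, which does not affect Cesàro limits.
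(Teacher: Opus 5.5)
Your proof is correct and is the same as the paper's. You apply the ergodic theorem to $X_n$ to get $\probP(X_n\in E)=1$, transfer this to $X_\infty$ via $d_{TV}(X_n,X_\infty)<O(\theta^n)$ to get $\probP(X_\infty\in E)\geq 1-O(\theta^n)$, and let $n\to\infty$; your remarks on measurability of $E$, random initial states, finitely many altered terms, and irreducibility fill in details the paper leaves implicit.
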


\begin{proof}
By the ergodic theorem for Markov chains, for any $n$,
\[
\probP
\left(
\frac{1}{N} \sum_{i=1}^N \One_{((X_n)_i=A )}
\xrightarrow{N \rightarrow \infty}
\mu_{u}(A)
\right) =1
\]
Hence, by (\ref{eqn:xinf}),
\[
\probP
\left(
\frac{1}{N} \sum_{i=1}^N \One_{((X_\infty)_i=A )}
\xrightarrow{N \rightarrow \infty}
\mu_{u}(A)
\right) \geq 1 - O(\theta^n)
\]
Taking $n\rightarrow \infty$ proves (\ref{eqn:ergodicaverages}). 
\end{proof}

\begin{remark}
Although we have proven (\ref{eqn:ergodicaverages}) only for i.i.d. matrices over finite local rings, (\ref{eqn:ergodicaverages}) in fact implies the analogous statement for i.i.d. matrices over $\Z_p$.
\end{remark}

\bibliographystyle{alpha}
\bibliography{ThesisBibliographyPrivetPrivet}
\end{document}